\documentclass[10pt]{article}
\usepackage[T1]{fontenc}
\usepackage[cp1250]{inputenc}
\usepackage{lmodern}

\usepackage[english]{babel}
\usepackage{latexsym}
\usepackage{amsmath}
\usepackage{indentfirst}
\usepackage{amsthm}
\usepackage{amssymb}
\usepackage{amsfonts}
\usepackage{stackrel}
\usepackage{dsfont}
\usepackage{tikz}
\usepackage{slashbox}
\usepackage[mathscr]{eucal}
\usepackage{authblk}
\usepackage{hyperref}
\usepackage{mathabx}
\usepackage{bookmark}

\newtheorem{thm}{Theorem}
\newtheorem{lemma}[thm]{Lemma}

\newcommand{\reals}{\mathbb{R}}

\newcommand{\integers}{\mathbb{Z}}
\newcommand{\complex}{\mathbb{C}}

\newcommand{\Fourier}{\mathcal{F}}
\newcommand{\Laplace}{\mathcal{L}}
\newcommand{\cosine}{\mathcal{C}}

\newcommand{\COS}{\mathcal{COS}}

\textwidth = 460pt
\oddsidemargin = 15pt
\voffset = -20pt
\hoffset = -15pt
\marginparwidth = -10pt
\marginparsep = 0pt

\begin{document}
\title{Integral transforms characterized by convolution}
\author{Mateusz Krukowski}
\affil{Institute of Mathematics, \L\'od\'z University of Technology, \\ W\'ol\-cza\'n\-ska 215, \
90-924 \ \L\'od\'z, \ Poland \\ \vspace{0.3cm} e-mail: mateusz.krukowski@p.lodz.pl}
\maketitle

\begin{abstract}
Inspired by Jaming's characterization of the Fourier transform on specific groups via the convolution property, we provide a novel approach which characterizes the Fourier transform on any locally compact abelian group. In particular, our characterization encompasses Jaming's results. Furthermore, we demonstrate that the cosine transform as well as the Laplace transform can also be characterized via a suitable convolution property.
\end{abstract}

\smallskip
\noindent 
\textbf{Keywords : } Fourier transform, cosine transform, Laplace transform, convolution property
\vspace{0.2cm}
\\
\textbf{AMS Mathematics Subject Classification (2020): } 43A25, 43A32, 44A10

\section{Introduction}
\label{Chapter:Introduction}

It is well-known that the Fourier transform satisfies the convolution property. More precisely, let $G$ be a locally compact abelian group with a Haar measure $\mu$ and a dual group $\widehat{G}.$ The Fourier transform is a map $\Fourier: L^1(G)\longrightarrow C_0(\widehat{G})$ given by the formula
\begin{gather}
\forall_{\substack{f\in L^1(G)\\ \chi\in \widehat{G}}}\ \Fourier(f)(\chi) := \int_G\ f(x)\overline{\chi(x)}\ d\mu(x),
\label{Fouriertransform}
\end{gather}

\noindent
whereas the Fourier convolution $\star_{\Fourier}: L^1(G)\times L^1(G)\longrightarrow L^1(G)$ is given by
\begin{gather}
\forall_{\substack{f,g\in L^1(G)\\ x\in G}}\ f\star_{\Fourier} g(x) := \int_G\ f(u)g(x-u)\ d\mu(u).
\label{Fourierconvolution}
\end{gather}

\noindent
For the sake of convenience, from this point onwards we will write $dx$ and $du$ instead of ``$d\mu(x)$'' and ``$d\mu(u)$'', respectively. The following Fourier convolution property (see Lemma 1.7.2 in \cite{DeitmarEchterhoff}, p. 30) holds:
$$\forall_{f,g\in L^1(G)}\ \Fourier(f\star_{\Fourier}g) = \Fourier(f)\Fourier(g).$$

Jaming proved that the convolution property characterizes (to a certain degree) the Fourier transform if $G = \reals, S^1, \integers$ or $\integers_n$ (see \cite{Jaming}). His paper inspired Lavanya and Thangavelu to show that any continuous *-homomorphism of $L^1(\complex^d)$ (with twisted convolution as multiplication) into $B(L^2(\reals^d))$ is essentially a Weyl transform and deduce a similar characterization for the group Fourier transform on the Heisenberg group (see \cite{LavanyaThangavelu} and \cite{LavanyaThangavelu2}). Furthermore, Kumar and Sivananthan went on to demonstrate that the convolution property characterizes the Fourier transform on compact groups (see \cite{KumarSivananthan}), while Alesker, Artstein-Avidan, Faifman and Milman studied the Fourier transform in terms of product preserving maps (see \cite{AleskerArtsteinAvidanFaifmanMilman}).

Studying the topic we realized two things that prompted us to write this paper. Firstly, Jaming's characterization of the Fourier transform need not be restricted to particular cases $G =  \reals, S^1, \integers$ or $\integers_n$. There is a unified approach for all locally compact abelian groups, which we demonstrate in the first part of Section \ref{Chapter:Fouriertransform}. 

Secondly, we discovered that the Fourier transform is not the only one that can be characterized via the convolution property. In the second part of Section \ref{Chapter:Fouriertransform} and in Section \ref{Chapter:Laplacetransform} we explain that cosine and Laplace convolutions characterize cosine and Laplace transforms, respectively.

\section{Fourier and cosine transform}
\label{Chapter:Fouriertransform}

As we have agreed in the Introduction, let $G$ stand for a locally compact abelian group with Haar measure $\mu$ and dual group $\widehat{G}.$ The formula for the Fourier transform and Fourier convolution are given by \eqref{Fouriertransform} and \ref{Fourierconvolution}, respectively.

\begin{lemma}
For every function $g\in L^1(G)$ and $x,y\in G$ the following equality holds
$$L_xg \star_{\Fourier} L_yg = g \star_{\Fourier} L_{x+y}g,$$

\noindent
where for every $z\in G$ the operator $L_z:L^1(G)\longrightarrow L^1(G)$ is given by 
\begin{gather*}
\forall_{u\in G}\ L_zf(u) := f(u - z).
\end{gather*}
\label{shiftoperators}
\end{lemma}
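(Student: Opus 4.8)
The plan is to prove the identity by a direct, pointwise computation: I would unfold both sides of the claimed equality using only the definition of the Fourier convolution in \eqref{Fourierconvolution} together with the definition of the shift operator $L_z$, and then reduce one side to the other by a single change of variables that exploits the translation invariance of the Haar measure $\mu$. Since both sides are elements of $L^1(G)$, it suffices to verify the equality of the two integrands as functions of a free variable $t\in G$ (up to the usual almost-everywhere caveat).

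Concretely, I would first evaluate the left-hand side at a point $t\in G$, obtaining
$$L_xg \star_{\Fourier} L_yg(t) = \int_G (L_xg)(u)\,(L_yg)(t-u)\ du = \int_G g(u-x)\,g(t-u-y)\ du.$$
Next I would evaluate the right-hand side at the same point,
$$g \star_{\Fourier} L_{x+y}g(t) = \int_G g(u)\,g(t-u-x-y)\ du.$$
The key step is then the substitution $v := u - x$ in the first integral. By the translation invariance of the Haar measure we have $du = dv$, and the integral becomes $\int_G g(v)\,g(t-v-x-y)\ dv$, which is literally the right-hand side. Renaming the dummy variable $v$ back to $u$ closes the argument.

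The only point that requires care — and what I would flag as the ``main obstacle,'' though it is a mild one — is the invocation of translation invariance: one must make explicit that $\mu$ being a Haar measure on the abelian group $G$ guarantees $\int_G h(u-x)\ du = \int_G h(u)\ du$ for every $h\in L^1(G)$ and every fixed $x\in G$, and that the substitution is legitimate for $\mu$-almost every $t$. Beyond this, the computation is entirely routine, so I would keep the write-up short and simply remark that the identity is an immediate consequence of unwinding the definitions and shifting the integration variable.
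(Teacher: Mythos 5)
Your proof is correct and follows exactly the same route as the paper: unfold both convolutions via \eqref{Fourierconvolution}, then apply the single substitution $v = u - x$ justified by translation invariance of the Haar measure. The only differences are notational (choice of dummy variable names), so there is nothing further to add.
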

\begin{proof}
For every $u\in G$ we have 
\begin{equation*}
\begin{split}
L_xg\star_{\Fourier}L_yg(u) &= \int_G\ L_xg(v)L_yg(u-v)\ dv = \int_G\ g(v-x)g(u-v-y)\ dv\\
&\stackrel{v\mapsto v+x}{=} \int_G\ g(v)g(u-v-(x+y))\ dv = \int_G\ g(v)L_{x+y}g(u-v)\ dv = g\star_{\Fourier}L_{x+y}g(u),
\end{split}
\end{equation*}

\noindent
which completes the proof.
\end{proof}

The following theorem generalizes Theorems 2.1 and 3.1 in Jaming's paper (see \cite{Jaming}).

\begin{thm}
Let $T:L^1(G)\longrightarrow L^{\infty}(\widehat{G})$ be a linear and bounded operator. If it satisfies the Fourier convolution property
$$\forall_{f,g\in L^1(G)}\ T(f\star_{\Fourier} g) = T(f)T(g)$$ 

\noindent
then there exists a function $\theta_{\Fourier}: \widehat{G} \longrightarrow \widehat{G}\cup\{0\}$ such that 
\begin{gather}
\forall_{f\in L^1(G)}\ T(f) = \Fourier(f) \circ \theta_{\Fourier}
\label{mainresultFourier}
\end{gather}

\noindent
where $\theta_{\Fourier}(\phi) = 0$ if and only if $T(f)(\phi) = 0$ for every $f\in L^1(G).$
\label{characterizationofFouriertransform}
\end{thm}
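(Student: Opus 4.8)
The plan is to evaluate the convolution property on functions of the form $L_x g$ and exploit Lemma~\ref{shiftoperators} to force $T$ to behave like a character composition. First I would fix a function $g \in L^1(G)$ with $T(g) \neq 0$ in $L^\infty(\widehat{G})$ and apply $T$ to the identity $L_x g \star_{\Fourier} L_y g = g \star_{\Fourier} L_{x+y} g$. Using the convolution property twice, this yields
\begin{gather*}
T(L_x g)\, T(L_y g) = T(g)\, T(L_{x+y} g)
\end{gather*}
as an identity in $L^\infty(\widehat{G})$. Evaluating pointwise at a fixed $\phi \in \widehat{G}$ where $T(g)(\phi) \neq 0$, and setting $\eta_\phi(x) := T(L_x g)(\phi) / T(g)(\phi)$, I expect to obtain the multiplicative relation $\eta_\phi(x)\eta_\phi(y) = \eta_\phi(x+y)$, i.e. $\eta_\phi$ is a (measurable, bounded) homomorphism from $G$ into $\complex$. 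Boundedness of $T$ should force $|\eta_\phi| \le 1$ pointwise, and the analogous relation run backwards should give $|\eta_\phi|\ge 1$, pinning $\eta_\phi$ to the unit circle; combined with measurability this identifies $\eta_\phi$ with a genuine character, i.e.\ an element of $\widehat{G}$. This character is the value $\theta_{\Fourier}(\phi)$.

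Next I would relate $T(L_x g)(\phi)$ to the Fourier transform. Since $\eta_\phi \in \widehat{G}$, I can write $\eta_\phi(x) = \overline{\theta_{\Fourier}(\phi)(x)}$ for the appropriate character $\theta_{\Fourier}(\phi)$, matching the conjugation convention in \eqref{Fouriertransform}. The key computation is to recover $T(f)(\phi)$ for a general $f$ by expressing $f$ through translates of $g$ and passing to the integral. Concretely, using $T(L_x g)(\phi) = T(g)(\phi)\,\overline{\theta_{\Fourier}(\phi)(x)}$ together with linearity and continuity of $T$, I would show that for suitable $f$ the quantity $T(f)(\phi)$ equals a constant multiple of $\int_G f(x)\overline{\theta_{\Fourier}(\phi)(x)}\,dx = \Fourier(f)(\theta_{\Fourier}(\phi))$. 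The normalization constant should turn out to be $1$ after a careful choice of the reference character, or be absorbed by the freedom in selecting $\theta_{\Fourier}(\phi)$.

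For the degenerate case I would define $\theta_{\Fourier}(\phi) := 0$ precisely when $T(f)(\phi) = 0$ for all $f \in L^1(G)$, and interpret $\Fourier(f)(0)$ as $0$, which makes \eqref{mainresultFourier} consistent on that set. The remaining task is to check that $\theta_{\Fourier}$ is well defined: the character $\eta_\phi$ extracted above must not depend on the auxiliary choice of $g$, which follows by comparing two generators $g_1, g_2$ via the convolution property applied to $g_1 \star_{\Fourier} g_2$ and matching their induced characters at $\phi$.

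The main obstacle I anticipate is the passage from the discrete multiplicative relation on translates to the full integral formula, that is, justifying the interchange of $T$ with the integral $\int_G f(x) L_x g\, dx$ (a vector-valued Bochner integral in $L^1(G)$) so that boundedness and linearity of $T$ let me pull $T$ inside. One must verify that $f$ can be represented, or approximated, by such superpositions of translates of a single $g$ and that the approximation survives under $T$ and under evaluation at $\phi$; the regularity of $\eta_\phi$ (measurability forcing continuity for characters on a locally compact abelian group) is a secondary but essential subtlety, since only continuous homomorphisms into the circle qualify as elements of $\widehat{G}$.
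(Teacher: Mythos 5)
Your proposal follows essentially the same route as the paper: fix $\phi$, form the functional $T_\phi(f) := T(f)(\phi)$, use Lemma~\ref{shiftoperators} to make $x \mapsto T_\phi(L_x g)$ multiplicative, deduce from boundedness that it is a unimodular continuous character, and recover $T_\phi(f) = \Fourier(f)(\chi_\phi)$ by pulling $T_\phi$ through the vector-valued integral $f \star_{\Fourier} g = \int_G f(x) L_x g\ dx$. The two subtleties you flag resolve exactly as in the paper: continuity of the character follows directly from continuity of $x \mapsto L_x g$ in $L^1(G)$ (no measurable-homomorphism theorem is needed), and there is no need to approximate $f$ by translates of $g$ or to check independence of the auxiliary $g$ --- choosing $g_*$ with $T_\phi(g_*) = 1$ and using $T_\phi(f \star_{\Fourier} g_*) = T_\phi(f) T_\phi(g_*) = T_\phi(f)$ already closes the argument.
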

\begin{proof}
We pick $\phi\in\widehat{G}.$ If $T(f)(\phi) = 0$ for every $f\in L^1(G)$, then we define $\theta_{\Fourier}(\phi) := 0$ so that the equality \eqref{mainresultFourier} holds. Suppose then that $\phi$ is such that $T(f)(\phi)\neq 0$ for some $f\in L^1(G)$. We consider a nonzero, linear functional $T_{\phi}: L^1(G)\longrightarrow \complex$ given by $T_{\phi}(f) := T(f)(\phi).$ We pick $g_* \in L^1(G)$ such that $T_{\phi}(g_*) = 1$ and define a function $\chi_{\phi} : G\longrightarrow \complex$ by the formula
$$\chi_{\phi}(x) := \overline{T_{\phi}(L_xg_*)}.$$

\noindent
Let us remark, that the choice of $g_*$ need not be unique, so there might be many functions $\chi_{\phi}$ corresponding to $\phi.$

We will now focus on proving various properties of the function $\chi_{\phi}.$ To begin with, we observe that 
\begin{equation*}
\forall_{x,y\in G}\ T_{\phi}(L_xg_*) T_{\phi}(L_yg_*) = T_{\phi}(L_xg_* \star_{\Fourier} L_yg_*) \stackrel{\text{Lemma}\ \ref{shiftoperators}}{=} T_{\phi}(g_* \star_{\Fourier} L_{x+y}g_*) = T_{\phi}(g_*)T_{\phi}(L_{x+y}g_*) = T_{\phi}(L_{x+y}g_*).
\end{equation*}

\noindent
Taking the complex conjugate reveals the equation
$$\forall_{x,y\in G}\ \chi_{\phi}(x)\chi_{\phi}(y) = \chi_{\phi}(x+y).$$

Furthermore, since $T_{\phi}$ is a continuous linear functional, then Lemma 1.4.2 in \cite{DeitmarEchterhoff}, p. 18 implies that $\chi_{\phi}$ is continuous. It is also nonzero (as $\chi_{\phi}(0) = 1$) and bounded, since 
\begin{gather}
\forall_{x\in G}\ |\chi_{\phi}(x)| \leqslant |T_{\phi}(L_xg_*)| \leqslant \|T_{\phi}\|\cdot \|L_xg_*\| = \|T_{\phi}\|\cdot \|g_*\|.
\label{boundednessofchi}
\end{gather} 

\noindent
Finally we argue that $|\chi_{\phi}(x)| = 1$ for every $x\in G.$ Indeed, suppose there exists $\bar{x} \in G$ such that $|\chi_{\phi}(\bar{x})| \neq 1.$ Since 
$$1 = |\chi_{\phi}(0)| = |\chi_{\phi}(\bar{x} - \bar{x})| = |\chi_{\phi}(\bar{x})| |\chi_{\phi}(-\bar{x})|$$

\noindent
then either $|\chi_{\phi}(\bar{x})| > 1$ or $|\chi_{\phi}(-\bar{x})| > 1.$ Without loss of generality, we may assume that the former is true. Consequently, we have 
$$\lim_{n\rightarrow \infty}\ |\chi_{\phi}(n\bar{x})| = |\chi_{\phi}(\bar{x})|^n = \infty,$$

\noindent
which contradicts boundedness of $\chi_{\phi}.$ Hence, we conclude that $|\chi_{\phi}(x)| = 1$ for every $x\in G.$ This means that $\chi_{\phi} \in \widehat{G}.$

Finally, using Lemma 11.45 in \cite{AliprantisBorder}, p. 427 (or Proposition 7 in \cite{Dinculeanu}, p. 123) we compute
\begin{equation*}
\begin{split}
\int_G\ f(x)\overline{\chi_{\phi}(x)}\ dx &= \int_G\ f(x) T_{\phi}(L_xg_*)\ dx = T_{\phi}\left(\int_G\ f(x) L_xg_*\ dx\right) \\
&= T_{\phi}(f\star_{\Fourier} g_*) = T_{\phi}(f) T_{\phi}(g_*) = T_{\phi}(f), 
\end{split}
\end{equation*}

\noindent
which concludes the proof if we put $\theta_{\Fourier}(\phi) := \chi_{\phi}.$
\end{proof}

We move on to show that the cosine transform admits a similar convolution characterization. The cosine transform is a map $\cosine : L^1(G)\longrightarrow C_0(\widehat{G})$ given by the formula 
\begin{gather}
\forall_{\substack{f,g\in L^1(G)\\ \chi\in \COS(G)}}\ \cosine(f)(\chi) := \int_G\ f(x)\chi(x)\ dx,
\label{cosinetransform}
\end{gather}

\noindent
where 
\begin{gather}
\COS(G) := \bigg\{\chi \in C^b(G)\ :\ \chi \neq 0,\ \forall_{x,y\in G}\ \chi(x)\chi(y) = \frac{\chi(x+y) + \chi(x-y)}{2}\bigg\}.
\label{cosineclass}
\end{gather}

\noindent
Functions in $\COS(G)$ are called cosine functions on group $G$, since for $G = \reals$ we have 
$$\COS(G) = \bigg\{x\mapsto \cos(yx)\ :\ y\in\reals\bigg\}.$$

\noindent
The cosine convolution $\star_{\cosine}:L^1(G)\times L^1(G)\longrightarrow  L^1(G)$ is given by
\begin{gather}
\forall_{x\in G}\ f\star_{\cosine} g(x) := \int_G\ f(u)\cdot \frac{g(x+u) + g(x-u)}{2}\ du.
\label{cosineconvolution}
\end{gather}

Let us recall a property of the cosine convolution which will be crucial in the sequel (see Theorem 2 in \cite{Krukowski}):

\begin{lemma}
Let $x,y\in G.$ If $g\in L^1(G)$ is an even function, then 
\begin{gather}
L_yg \star_{\cosine} L_xg = g\star_{\cosine} \frac{L_{x+y}g + L_{x-y}g}{2}.
\label{LxgstarcLyg}
\end{gather}
\label{dAlembertproperty}
\end{lemma}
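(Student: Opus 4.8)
The plan is to verify \eqref{LxgstarcLyg} pointwise, in the same spirit as Lemma~\ref{shiftoperators}, though the symmetrisation built into the cosine convolution \eqref{cosineconvolution} forces a little more work and is precisely where evenness of $g$ enters. Fix $t\in G$. Unfolding \eqref{cosineconvolution} together with the definition of the shift operators, the left-hand side becomes
\[
L_yg\star_{\cosine}L_xg(t) = \frac12\int_G g(u-y)\big[g(t+u-x) + g(t-u-x)\big]\,du.
\]
Translating the integration variable by $y$ (the substitution $u\mapsto u+y$, legitimate by translation-invariance of the Haar measure) clears the shift in the first factor and, abbreviating $a := t-x$, yields
\[
L_yg\star_{\cosine}L_xg(t) = \frac12\int_G g(u)\big[g(a+u+y) + g(a-u-y)\big]\,du.
\]

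For the right-hand side I would expand $g\star_{\cosine}\tfrac12\big(L_{x+y}g + L_{x-y}g\big)$ directly. Each of the two summands contributes through both $g(t+u)$ and $g(t-u)$, so with the same abbreviation $a=t-x$ one obtains the four-term expression
\[
g\star_{\cosine}\frac{L_{x+y}g + L_{x-y}g}{2}(t) = \frac14\int_G g(u)\big[g(a+u-y) + g(a+u+y) + g(a-u-y) + g(a-u+y)\big]\,du.
\]
Matching the prefactor $\tfrac14$ against the reduced left-hand side, the entire claim collapses to the single identity
\[
\int_G g(u)\big[g(a+u-y) - g(a+u+y) + g(a-u+y) - g(a-u-y)\big]\,du = 0.
\]

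The decisive step --- and the only one that uses the hypothesis that $g$ is even --- is the reflection $u\mapsto -u$. Since inversion preserves the Haar measure on any locally compact abelian group, this substitution together with $g(-u)=g(u)$ turns $\int_G g(u)\,g(a-u+y)\,du$ into $\int_G g(u)\,g(a+u+y)\,du$, cancelling the $g(a+u+y)$ term, and identically transforms $\int_G g(u)\,g(a-u-y)\,du$ into $\int_G g(u)\,g(a+u-y)\,du$, cancelling the $g(a+u-y)$ term. Thus the four integrals annihilate in pairs and the displayed difference vanishes, proving the lemma. The expansions and the translation substitution are routine; the genuine content is this reflection symmetry, which is exactly what evenness of $g$ supplies.
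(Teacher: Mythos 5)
Your proof is correct. The paper itself does not prove this lemma at all --- it simply recalls it as Theorem 2 of \cite{Krukowski} --- so your argument is a genuinely self-contained alternative to citing the literature. The computation checks out: after expanding both sides of \eqref{LxgstarcLyg} via \eqref{cosineconvolution}, shifting $u\mapsto u+y$ on the left, and writing $a=t-x$, the difference of the two sides reduces exactly to the four-term integral you display, and the reflection $u\mapsto -u$ (valid since the Haar measure of a locally compact abelian group is inversion-invariant) together with $g(-u)=g(u)$ pairs the terms off as you describe. This mirrors the style of the paper's own proof of Lemma \ref{shiftoperators}, with evenness entering precisely and only at the reflection step, which is the right diagnosis of where the hypothesis is used.
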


Before we present a characterization of the cosine transform we need one more technical result:

\begin{lemma}
Let $T:L^1(G)\longrightarrow L^{\infty}(\COS(G))$ be a linear and bounded operator, which satisfies the cosine convolution property
\begin{gather}
\forall_{f,g\in L^1(G)}\ T(f\star_{\cosine} g) = T(f)T(g).
\label{cosineconvolutionproperty}
\end{gather}

\noindent
If $\phi \in \COS(G)$ is such that $T(f_*)(\phi) \neq 0$ for some $f_*\in L^1(G)$, then there exists an even function $g_*\in L^1(G)$ such that $T(g_*)(\phi)\neq 0.$
\label{existenceofevenfunction}
\end{lemma}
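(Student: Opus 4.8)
The plan is to work with the nonzero linear functional $T_{\phi}:L^1(G)\longrightarrow \complex$ defined by $T_{\phi}(f) := T(f)(\phi)$, exactly as in the proof of Theorem \ref{characterizationofFouriertransform}. The cosine convolution property then reads $T_{\phi}(f\star_{\cosine}g) = T_{\phi}(f)T_{\phi}(g)$ for all $f,g\in L^1(G)$, and the hypothesis guarantees $T_{\phi}(f_*)\neq 0$. The even function I will produce is simply the even part of $f_*$, namely $g_* := \tfrac{1}{2}(f_* + \check{f_*})$, where $\check{f_*}(x) := f_*(-x)$; note that $g_*\in L^1(G)$ because the Haar measure on a locally compact abelian group is inversion-invariant, so that $\check{f_*}\in L^1(G)$.

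The key observation — and really the only nontrivial point — is that the cosine convolution ignores the odd part of its first argument. Indeed, for fixed $x\in G$ the kernel $u\mapsto \tfrac{1}{2}(g(x+u) + g(x-u))$ is even in $u$, so by inversion-invariance of the Haar measure the integral of an odd function against it vanishes. Writing $f_* = g_* + h$ with $h := \tfrac{1}{2}(f_* - \check{f_*})$ the odd part, this yields $h\star_{\cosine} f_* = 0$ and hence the identity
$$f_* \star_{\cosine} f_* = g_* \star_{\cosine} f_*$$
as elements of $L^1(G)$. I would establish this first, carefully justifying the substitution $u\mapsto -u$ through inversion-invariance of $\mu$.

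With this identity in hand the conclusion is immediate. Applying $T_{\phi}$ to both sides and using the cosine convolution property \eqref{cosineconvolutionproperty} twice gives
$$T_{\phi}(f_*)^2 = T_{\phi}(f_*\star_{\cosine}f_*) = T_{\phi}(g_*\star_{\cosine}f_*) = T_{\phi}(g_*)\,T_{\phi}(f_*).$$
Since $T_{\phi}(f_*)\neq 0$, dividing yields $T_{\phi}(g_*) = T_{\phi}(f_*)\neq 0$, that is $T(g_*)(\phi)\neq 0$ for the even function $g_*$, which is precisely the assertion.

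I do not anticipate a genuine obstacle here: once the symmetry identity $f_*\star_{\cosine}f_* = g_*\star_{\cosine}f_*$ is recognized, the remainder is a two-line computation driven by the convolution property. The only step demanding care is the measure-theoretic justification that the odd part drops out, i.e. that the change of variables $u\mapsto -u$ is legitimate and that the evenness of the kernel in $u$, combined with inversion-invariance of $\mu$, forces the odd contribution to vanish. It is worth noting that, in contrast to Lemma \ref{dAlembertproperty}, no evenness of $f_*$ is assumed; the evenness is \emph{produced} rather than hypothesized.
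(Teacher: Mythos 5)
Your proposal is correct and follows essentially the same route as the paper: both arguments exploit the evenness in $u$ of the kernel $u\mapsto\tfrac12(g(x+u)+g(x-u))$ together with inversion-invariance of the Haar measure, which in the paper appears as the identity $(f\circ\iota)\star_{\cosine}f=f\star_{\cosine}f$ and in your write-up as the equivalent statement that the odd part of the first argument convolves to zero; both then apply the multiplicative functional $T_{\phi}$ and divide by $T_{\phi}(f_*)\neq 0$ to certify the even part of $f_*$ as the witness. The only difference is cosmetic (the paper takes $g_*=f_*+f_*\circ\iota$ rather than half of it, and obtains $T_{\phi}(g_*)=2T_{\phi}(f_*)$), so no further comment is needed.
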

\begin{proof}
Let $\iota:G\longrightarrow G$ be the inverse function $\iota(x) := -x$ and let $T_{\phi}: L^1(G)\longrightarrow \complex$ be a nonzero, linear functional given by $T_{\phi}(f) := T(f)(\phi).$  Then
\begin{equation*}
\begin{split}
\forall_{f\in L^1(G)}\ (f\circ\iota) \star_{\cosine} f(x) &= \int_G\ f\circ\iota(u) \cdot \frac{f(x+u) + f(x-u)}{2}\ du\\
&\stackrel{u\mapsto -u}{=} \int_G\ f(u) \cdot \frac{f(x-u) + f(x+u)}{2}\ du = f\star_{\cosine} f,
\end{split}
\end{equation*}

\noindent
which leads to 
\begin{gather*}
\forall_{f\in L^1(G)}\ T_{\phi}(f\circ\iota)T_{\phi}(f) = T_{\phi}((f\circ\iota) \star_{\cosine} f) = T_{\phi}(f\star_{\cosine} f) = T_{\phi}(f)^2.
\label{canwedividebymf}
\end{gather*}

\noindent
Consequently, we have $T_{\phi}(f_*\circ\iota) = T_{\phi}(f_*).$ Finally, we put $g_* := f_* + f_*\circ\iota$ and observe that 
$$T_{\phi}(g_*) = T_{\phi}(f_*+f_*\circ\iota) = T_{\phi}(f_*) + T_{\phi}(f_*\circ\iota) = 2T_{\phi}(f_*) \neq 0,$$

\noindent
which concludes the proof.  
\end{proof}

We are now ready to demonstrate a counterpart of Theorem \ref{characterizationofFouriertransform} for the cosine transform:

\begin{thm}
Let $T:L^1(G)\longrightarrow L^{\infty}(\COS(G))$ be a linear and bounded operator. If it satisfies the cosine convolution property \eqref{cosineconvolutionproperty}, then there exists a function $\theta_{\cosine}: \COS(G) \longrightarrow \COS(G)\cup\{0\}$ such that 
\begin{gather}
\forall_{f\in L^1(G)}\ T(f) = \cosine(f) \circ \theta_{\cosine}
\label{mainresultcosine}
\end{gather}

\noindent
where $\theta_{\cosine}(\phi) = 0$ if and only if $T(f)(\phi) = 0$ for every $f\in L^1(G).$
\end{thm}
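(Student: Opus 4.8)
The plan is to follow the proof of Theorem~\ref{characterizationofFouriertransform} almost verbatim, with Lemma~\ref{shiftoperators} replaced by its cosine counterpart, Lemma~\ref{dAlembertproperty}. Fix $\phi\in\COS(G)$. If $T(f)(\phi)=0$ for every $f\in L^1(G)$, I set $\theta_{\cosine}(\phi):=0$ and \eqref{mainresultcosine} holds trivially. Otherwise I consider the nonzero, continuous linear functional $T_{\phi}(f):=T(f)(\phi)$. The only genuine departure from the Fourier situation is that Lemma~\ref{dAlembertproperty} may be invoked only for an \emph{even} function; this is precisely what Lemma~\ref{existenceofevenfunction} provides, yielding an even $g_*\in L^1(G)$ with $T_{\phi}(g_*)\neq 0$. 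Since a scalar multiple of an even function is again even, I rescale $g_*$ so that $T_{\phi}(g_*)=1$ and define $\chi_{\phi}(x):=T_{\phi}(L_xg_*)$. Note that, in contrast to the Fourier case, no complex conjugate is needed, because $\chi_{\phi}$ enters the cosine transform \eqref{cosinetransform} without one.

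First I would check that $\chi_{\phi}\in\COS(G)$. Applying $T_{\phi}$ to both sides of \eqref{LxgstarcLyg}, then using the cosine convolution property \eqref{cosineconvolutionproperty}, linearity, and the normalization $T_{\phi}(g_*)=1$, I expect to arrive at
$$\forall_{x,y\in G}\ \chi_{\phi}(x)\chi_{\phi}(y) = \frac{\chi_{\phi}(x+y)+\chi_{\phi}(x-y)}{2}.$$
The function $\chi_{\phi}$ is nonzero since $\chi_{\phi}(0)=T_{\phi}(g_*)=1$, continuous by continuity of $T_{\phi}$ together with the continuity of $x\mapsto L_xg_*$ (Lemma 1.4.2 in \cite{DeitmarEchterhoff}, exactly as in the Fourier proof), and bounded because $|\chi_{\phi}(x)|\leqslant\|T_{\phi}\|\cdot\|g_*\|$. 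By \eqref{cosineclass} these properties place $\chi_{\phi}$ in $\COS(G)$; pleasantly, the modulus-one argument required in Theorem~\ref{characterizationofFouriertransform} has no analogue here and is simply omitted.

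The main obstacle is the final identification $\cosine(f)(\chi_{\phi})=T_{\phi}(f)$, which is more delicate than in the Fourier case because $f\star_{\cosine}g_*$ is not of the form $\int_G f(x)L_xg_*\,dx$. To deal with it I would first note that setting $x=0$ in the cosine functional equation above forces $\chi_{\phi}$ to be even, i.e. $\chi_{\phi}(-x)=\chi_{\phi}(x)$. From the definition \eqref{cosineconvolution} and $L_xg_*(u)=g_*(u-x)$ one rewrites
$$f\star_{\cosine}g_* = \int_G\ f(x)\cdot\frac{L_xg_*+L_{-x}g_*}{2}\ dx,$$
so that interchanging $T_{\phi}$ with the integral (Lemma 11.45 in \cite{AliprantisBorder}) gives
$$T_{\phi}(f\star_{\cosine}g_*) = \int_G\ f(x)\cdot\frac{\chi_{\phi}(x)+\chi_{\phi}(-x)}{2}\ dx = \int_G\ f(x)\chi_{\phi}(x)\ dx = \cosine(f)(\chi_{\phi}),$$
the middle step being exactly where evenness of $\chi_{\phi}$ is used. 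On the other hand, the cosine convolution property and the normalization yield $T_{\phi}(f\star_{\cosine}g_*)=T_{\phi}(f)T_{\phi}(g_*)=T_{\phi}(f)$. Comparing the two computations gives $\cosine(f)(\chi_{\phi})=T(f)(\phi)$ for every $f\in L^1(G)$, and putting $\theta_{\cosine}(\phi):=\chi_{\phi}$ completes the proof.
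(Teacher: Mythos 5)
Your proposal is correct and follows essentially the same route as the paper: the same reduction to the functional $T_{\phi}$, the same use of Lemma~\ref{existenceofevenfunction} to obtain an even normalized $g_*$, the same verification via Lemma~\ref{dAlembertproperty} that $\chi_{\phi}(x):=T_{\phi}(L_xg_*)$ lies in $\COS(G)$, and the same final interchange of $T_{\phi}$ with the Bochner integral $\int_G f(x)\frac{L_xg_*+L_{-x}g_*}{2}\,dx$. The only cosmetic difference is that you phrase the $x=0$ consequence of the d'Alembert equation as evenness of $\chi_{\phi}$, whereas the paper records the equivalent identity $\chi_{\phi}(y)=\frac{\chi_{\phi}(y)+\chi_{\phi}(-y)}{2}$.
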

\begin{proof}
The proof follows the same lines as the proof of Theorem \ref{characterizationofFouriertransform}. We pick $\phi\in\COS(G)$ and check if $T(f)(\phi) = 0$ for every $f\in L^1(G)$. If so, we define $\theta_{\cosine}(\phi) := 0.$ Otherwise, if $T(f)(\phi) \neq 0$ for some $f\in L^1(G),$ then we consider a nonzero, linear functional $T_{\phi}: L^1(G)\longrightarrow \complex$ given by $T_{\phi}(f) := T(f)(\phi).$ By Lemma \ref{existenceofevenfunction} there exists an even function $g_* \in L^1(G)$ such that $T_{\phi}(g_*) = 1.$ We may thus define a function $\chi_{\phi} : G\longrightarrow \complex$ by the formula
$$\chi_{\phi}(x) := T_{\phi}(L_xg_*).$$

As in Theorem \ref{characterizationofFouriertransform} we study the properties of $\chi_{\phi}$ and see that 
\begin{itemize}
	\item it is nonzero due to $\chi_{\phi}(0) =1,$
	\item it is bounded due to \eqref{boundednessofchi},
	\item it is continuous due to Lemma 1.4.2 in \cite{DeitmarEchterhoff}, p. 18.
\end{itemize}

\noindent
Furthermore, we have 
\begin{equation*}
\begin{split}
\forall_{x,y\in G}\ T_{\phi}(L_xg_*) T_{\phi}(L_yg_*) &= T_{\phi}(L_xg_* \star_c L_yg_*) \stackrel{\text{Lemma}\ \ref{dAlembertproperty}}{=} T_{\phi}\left(g_* \star_{\cosine} \frac{L_{x+y}g_* + L_{x-y}g_*}{2}\right) \\
&= T_{\phi}(g_*) \cdot \frac{T_{\phi}(L_{x+y}g_*) + T_{\phi}(L_{x-y}g_*)}{2}= \frac{T_{\phi}(L_{x+y}g_*) + T_{\phi}(L_{x-y}g_*)}{2},
\end{split}
\end{equation*}

\noindent
which can be written as 
$$\forall_{x,y\in G}\ \chi_{\phi}(x)\chi_{\phi}(y) = \frac{\chi_{\phi}(x+y) + \chi_{\phi}(x-y)}{2}.$$

\noindent
We conclude that $\chi_{\phi} \in \COS(G),$ which in particular means that 
\begin{gather}
\forall_{y\in G}\ \chi_{\phi}(y) = \frac{\chi_{\phi}(y) + \chi_{\phi}(-y)}{2}.
\label{chiasmean}
\end{gather}

Finally, using Lemma 11.45 in \cite{AliprantisBorder}, p. 427 (or Proposition 7 in \cite{Dinculeanu}, p. 123) we compute
\begin{equation*}
\begin{split}
\int_G\ f(x) \chi_{\phi}(x)\ dx &\stackrel{\eqref{chiasmean}}{=} \int_G\ f(x) \cdot \frac{\chi_{\phi}(x) + \chi_{\phi}(-x)}{2}\ dx = \int_G\ f(x) \cdot \frac{T_{\phi}(L_xg_*) + T_{\phi}(L_{-x}g_*)}{2}\ dx \\
&= T_{\phi}\left(\int_G\ f(x) \cdot \frac{L_xg_* + L_{-x}g_*}{2}\ dx\right) = T_{\phi}(f\star_{\cosine} g_*) = T_{\phi}(f) T_{\phi}(g_*) = T_{\phi}(f), 
\end{split}
\end{equation*}

\noindent
which concludes the proof if we put $\theta_{\cosine}(\phi) := \chi_{\phi}.$
\end{proof}

\section{Laplace transform}
\label{Chapter:Laplacetransform}

In the previous section we focused on Fourier and cosine transforms and characterized them via suitable convolution properties. The purpose of the current section is to demonstrate that the Laplace transform enjoys a similar characterization. Let us recall that the Laplace transform is a map 
$\Laplace:L^1(\reals_+)\longrightarrow C_0(\reals_+)$ given by
$$\forall_{\substack{f\in L^1(\reals_+)\\ y\in\reals_+}}\ \Laplace(f)(y) := \int_0^{\infty}\ e^{-yx}f(x)\ dx,$$

\noindent
whereas the Laplace convolution $\star_{\Laplace}:L^1(\reals_+)\times L^1(\reals_+)\longrightarrow L^1(\reals_+)$ is given by
\begin{gather}
\forall_{\substack{f,g\in L^1(\reals_+)\\ x \in \reals_+}}\ f\star_{\Laplace} g(x) := \int_0^x\ f(u)g(x-u)\ du.
\label{Laplaceconvolution}
\end{gather}

\noindent
It is well-known that the Laplace transform satisfies the equality (see Theorem 2.39 in \cite{Schiff}, p. 92):
\begin{gather}
\forall_{f,g\in L^1(\reals_+)}\ \Laplace(f\star_{\Laplace}g) = \Laplace(f)\Laplace(g).
\label{Laplaceconv}
\end{gather}

\noindent
Similarly to the previous section, our goal is to ``reverse the implication'' and ask whether there are any other operators satisfying \eqref{Laplaceconv}. A (almost) negative answer will constitute a characterization of the Laplace transform in terms of the Laplace convolution \eqref{Laplaceconvolution}. To this end, we need the following auxilary lemma:

\begin{lemma}
Let $(\Omega,\Sigma,\mu)$ be a measure space, $g\in L^{\infty}(\Omega)$ and let $D$ be a dense subset in $L^1(\Omega)$. If 
$$\forall_{f\in D}\ \int_{\Omega}\ f(x)g(x)\ dx = 0$$

\noindent
then $g = 0.$
\label{duBoisReymondlemma}
\end{lemma}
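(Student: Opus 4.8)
The plan is to prove the statement by contraposition combined with a standard duality argument. We want to show that if the integral against $g$ vanishes on a dense set $D$, then $g$ is the zero element of $L^\infty(\Omega)$. The natural pairing here is the duality between $L^1(\Omega)$ and $L^\infty(\Omega)$: a fixed $g \in L^\infty(\Omega)$ induces a bounded linear functional $\Phi_g$ on $L^1(\Omega)$ via $\Phi_g(f) := \int_\Omega f(x) g(x)\, dx$, and the operator norm of this functional equals $\|g\|_{L^\infty(\Omega)}$.

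First I would extend the vanishing hypothesis from the dense subset $D$ to all of $L^1(\Omega)$. Since $\Phi_g$ is continuous (its norm being $\|g\|_\infty < \infty$) and it vanishes on $D$, and $D$ is dense in $L^1(\Omega)$, a routine continuity-and-density argument gives $\Phi_g(f) = 0$ for every $f \in L^1(\Omega)$. Concretely, for arbitrary $f \in L^1(\Omega)$ pick a sequence $f_n \in D$ with $f_n \to f$ in $L^1$; then $|\Phi_g(f) - \Phi_g(f_n)| \leqslant \|g\|_\infty \|f - f_n\|_1 \to 0$, and since each $\Phi_g(f_n) = 0$ we conclude $\Phi_g(f) = 0$.

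Next I would invoke the identification of the dual of $L^1$. The vanishing of $\Phi_g$ on all of $L^1(\Omega)$ means $\Phi_g$ is the zero functional, so $\|\Phi_g\| = 0$. Because the map $g \mapsto \Phi_g$ is an isometric identification of $L^\infty(\Omega)$ with (a subspace of) the dual of $L^1(\Omega)$ — that is, $\|\Phi_g\| = \|g\|_{L^\infty(\Omega)}$ — we obtain $\|g\|_\infty = 0$, whence $g = 0$ in $L^\infty(\Omega)$. Alternatively, one can argue directly without naming the duality: testing the hypothesis against indicator functions of measurable sets of finite measure shows that the integral of $g$ over every such set is zero, which forces $g = 0$ almost everywhere by a standard characterization of the zero function.

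The main obstacle is essentially bookkeeping rather than a deep difficulty: one must be slightly careful about whether the measure space is $\sigma$-finite (so that the isometric duality $L^1(\Omega)^* \cong L^\infty(\Omega)$ holds cleanly) or whether to argue purely through test functions. Since only the implication ``functional vanishes $\Rightarrow$ $g=0$'' is needed — not the surjectivity half of the duality — the argument via indicator functions sidesteps any $\sigma$-finiteness subtleties: if $\int_E g\, d\mu = 0$ for all measurable $E$ of finite measure, then splitting $g$ into real and imaginary parts and each into positive and negative parts, and testing against the sets where each part exceeds $\tfrac1k$, forces $\mu(\{|g| > \tfrac1k\}) = 0$ for all $k$, hence $g = 0$ a.e. This is the classical du~Bois-Reymond lemma, and the density of $D$ is exactly what lets us reach all indicators (or all of $L^1$) from the given hypothesis.
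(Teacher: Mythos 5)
Your proposal is correct, and its first half coincides exactly with the paper's: both of you regard $g$ as the kernel of a bounded linear functional on $L^1(\Omega)$ and use density of $D$ plus continuity to conclude that the functional vanishes on all of $L^1(\Omega)$. Where you diverge is the final step. The paper simply writes ``in particular $T(g)=0$, which implies $g=0$'' --- i.e.\ it tests the functional against $g$ itself. That step silently assumes $g\in L^1(\Omega)$ (which does not follow from $g\in L^\infty(\Omega)$ on an infinite measure space such as the $\reals_+$ to which the lemma is later applied), and for complex-valued $g$ one should test against $\overline{g}$ to get $\int|g|^2=0$ rather than $\int g^2=0$. Your route --- invoking the isometric embedding of $L^\infty(\Omega)$ into $L^1(\Omega)^*$, or more elementarily testing against indicators of finite-measure sets and estimating $\mu(\{|g|>1/k\})$ --- avoids both issues, and you correctly flag the only remaining subtlety, namely that the isometry (and the ability of finite-measure sets to detect $g$ everywhere) requires $\sigma$-finiteness or at least semifiniteness; this is harmless here since the lemma is applied to Lebesgue measure on $\reals_+$ and $\reals_+\times\reals_+$. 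In short, your conclusion step is more careful than the paper's and buys a proof that is actually valid without the unstated integrability assumption.
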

\begin{proof}
We define a linear and bounded functional $T: L^1(\Omega) \longrightarrow \complex$ with the formula
$$T(f) := \int_{\Omega}\ f(x)g(x)\ dx.$$

\noindent
Since $D$ is dense in $L^1(\Omega)$ and $T|_D = 0$ then by Theorem 1.7 in \cite{ReedSimon}, p. 9 we conclude that $T(f) = 0$ for every $f\in L^1(\Omega).$ In particular, $T(g) = 0,$ which implies that $g = 0.$   
\end{proof}

At last, we are ready to characterize the Laplace transform in terms of the Laplace convolution property:

\begin{thm}
Let $T:L^1(\reals_+)\longrightarrow L^{\infty}(\reals_+)$ be a linear and bounded operator. If it satisfies the Laplace convolution property
\begin{gather}
\forall_{f,g\in L^1(G)}\ T(f\star_{\Laplace} g) = T(f)T(g)
\label{Laplaceconvolutionproperty}
\end{gather}

\noindent
then there exists a function $\theta_{\Laplace}: \reals_+ \longrightarrow \complex$ such that 
$$\forall_{f\in L^1(\reals_+)}\ T(f) = \Laplace(f) \circ \theta_{\Laplace}$$

\noindent
where $\theta_{\Laplace}(y) = 0$ if and only if $T(f)(y) = 0$ for every $f\in L^1(\reals_+).$
\end{thm}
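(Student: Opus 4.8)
The plan is to imitate the proof of Theorem \ref{characterizationofFouriertransform}, reducing everything to the scalar functional $T_y(f) := T(f)(y)$ and then exploiting the explicit action of the Laplace convolution on exponentials, with Lemma \ref{duBoisReymondlemma} supplying the final identification.

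Fix $y \in \reals_+$. If $T(f)(y) = 0$ for every $f$, set $\theta_{\Laplace}(y) := 0$, exactly as in the Fourier case. Otherwise $T_y$ is a nonzero, bounded, linear functional that is multiplicative for $\star_{\Laplace}$. First I would feed it the exponentials $e_a(x) := e^{-ax}$, $a > 0$, which lie in $L^1(\reals_+)$. A direct integration gives $e_a \star_{\Laplace} e_b = \tfrac{e_b - e_a}{a - b}$ for $a \neq b$, so applying $T_y$ and writing $h(a) := T_y(e_a)$ yields the functional equation $(a-b)\,h(a)h(b) = h(b) - h(a)$. Since $h \not\equiv 0$ forces $h$ to be nowhere zero, dividing through shows that $\tfrac{1}{h(a)} - a$ is a constant $c = c(y) \in \complex$; that is, $T_y(e_a) = \tfrac{1}{a+c}$ with $a + c \neq 0$ for all $a > 0$.

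Next I would pin down $c$ and identify $T_y$. By the duality $(L^1(\reals_+))^* = L^{\infty}(\reals_+)$ there is $\psi_y \in L^{\infty}(\reals_+)$ with $T_y(f) = \int_0^{\infty} f(x)\psi_y(x)\,dx$, so that $\Laplace(\psi_y)(a) = \tfrac{1}{a+c}$ for all $a > 0$. The Laplace transform of a bounded function is holomorphic on the open right half-plane $\{\mathrm{Re}\,s > 0\}$; since $\tfrac{1}{a+c}$ has a pole at $s = -c$, this pole cannot lie in the right half-plane, which forces $\mathrm{Re}\,c \geqslant 0$. Now $e_c$ is itself bounded and satisfies $\Laplace(e_c)(a) = \tfrac{1}{a+c}$, hence $\int_0^{\infty}\bigl(\psi_y(x) - e^{-cx}\bigr)e^{-ax}\,dx = 0$ for every $a > 0$. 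Because finite linear combinations of the $e_a$ are dense in $L^1(\reals_+)$ (a function in $L^\infty$ annihilating all $e_a$ has vanishing Laplace transform, hence is $0$), Lemma \ref{duBoisReymondlemma} gives $\psi_y = e^{-c\,\cdot}$ almost everywhere. Setting $\theta_{\Laplace}(y) := c$ we obtain $T(f)(y) = T_y(f) = \int_0^{\infty} f(x)e^{-cx}\,dx = \Laplace(f)(\theta_{\Laplace}(y))$ for all $f$, and the equivalence ``$\theta_{\Laplace}(y)=0$ iff $T(\cdot)(y)\equiv 0$'' is read off as in the Fourier case.

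The step I expect to be the main obstacle is guaranteeing that $\theta_{\Laplace}(y) = c$ lands where $\Laplace(f)(c)$ is defined for every $f \in L^1(\reals_+)$, i.e. that $\mathrm{Re}\,c \geqslant 0$: the norm bound $|T_y(e_a)| \leqslant \|T_y\|\,\|e_a\|_1 = \|T_y\|/a$ alone does not exclude $\mathrm{Re}\,c < 0$, so one genuinely needs the analyticity argument above. A secondary point is the familiar $L^{\infty}$ subtlety, namely that pointwise evaluation $f \mapsto T(f)(y)$ requires fixing representatives, and that the symbol $0$ in $\theta_{\Laplace}(y) = 0$ must be read as an adjoined point at which $\Laplace(f)$ is declared to vanish, in parallel with the role of $0$ in $\widehat{G}\cup\{0\}$ in Theorem \ref{characterizationofFouriertransform}. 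Alternatively the argument can be run strictly along the lines of Theorem \ref{characterizationofFouriertransform}: one verifies the Laplace analogue of Lemma \ref{shiftoperators}, namely $L_xg\star_{\Laplace}L_yg = g\star_{\Laplace}L_{x+y}g$ (a routine computation once the truncation in \eqref{Laplaceconvolution} is tracked), and sets $\chi_y(x) := T_y(L_xg_*)$; boundedness of $\chi_y$ then delivers $\mathrm{Re}\,c \geqslant 0$ for free, at the cost of not invoking Lemma \ref{duBoisReymondlemma}.
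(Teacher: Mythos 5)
Your proof is correct, but it takes a genuinely different route from the paper's. The paper represents $T_y$ by a function $\chi_y\in L^{\infty}(\reals_+)$, uses Fubini to write $T_y(f\star_{\Laplace}g)$ as a double integral against $\chi_y(u+v)$, extracts the integral equation $\chi_y(v)=\int_0^{\infty}g_*(u)\chi_y(u+v)\,du$ to upgrade $\chi_y$ to a continuous function, then applies Lemma \ref{duBoisReymondlemma} on the product space to get Cauchy's exponential equation $\chi_y(u+v)=\chi_y(u)\chi_y(v)$ pointwise and quotes the classification of its continuous bounded solutions. You instead test $T_y$ only on the explicit family $e_a(x)=e^{-ax}$, exploit the closed form $e_a\star_{\Laplace}e_b=(e_b-e_a)/(a-b)$ to solve an algebraic functional equation for $h(a)=T_y(e_a)$, obtain $h(a)=1/(a+c)$, and then identify the representing function using density of the linear span of $\{e_a\}$ in $L^1(\reals_+)$ together with holomorphy of the Laplace transform of a bounded function. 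This trades the paper's Fubini computation, continuity bootstrap and appeal to the Cauchy functional equation for two other standard facts (density of exponentials and the identity theorem on the punctured half-plane), and it is actually more careful on the one delicate point: the paper simply asserts $z\in\reals_+$ after quoting Bobrowski, which glosses over purely imaginary parts, whereas your $\mathrm{Re}\,c\geqslant 0$ is the accurate conclusion and you correctly identify it as the step needing a real argument. Two details you should make explicit: the claim $h\not\equiv 0$ itself already relies on the density of the span of the $e_a$ (a nonzero $T_y$ could a priori annihilate every $e_a$ if they were not total), and the pole-location step needs a sentence applying the identity theorem to equate $\Laplace(\psi_y)(s)$ with $1/(s+c)$ on $\{\mathrm{Re}\,s>0\}\setminus\{-c\}$ before comparing behaviour near $-c$. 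Your sketched alternative via $L_xg\star_{\Laplace}L_yg=g\star_{\Laplace}L_{x+y}g$ would bring the argument closer to the paper's Fourier proof, but the paper does not take that route for the Laplace case either.
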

\begin{proof}
We pick $y\in\reals_+$ and check whether $T(f)(y) = 0$ for every $f\in L^1(\reals_+).$ If so, we define $\theta_{\Laplace}(y) := 0.$ Otherwise, if $T(f)(y) \neq 0$ for some $f\in L^1(\reals_+),$ then we consider a nonzero, linear functional $T_y : L^1(\reals_+)\longrightarrow \complex$ given by $T_y(f) := T(f)(y).$ By Theorem 5.2.16 in \cite{Bobrowski}, p. 161 there exists a function $\chi_y\in L^{\infty}(\reals_+)$ such that 
$$\forall_{f\in L^1(G)}\ T_y(f) = \int_0^{\infty}\ f(x)\chi_y(x)\ dx.$$

By Fubini's theorem (see Theorem B.3.1 in \cite{DeitmarEchterhoff}, p. 287 or Theorem 8.8 in \cite{Rudin}, p. 164) we have
\begin{equation*}
\begin{split}
\forall_{f,g\in L^1(\reals_+)}\ T_y(f\star_{\Laplace}g) &= \int_0^{\infty}\ f\star_{\Laplace}g(u)\chi_y(u)\ du = \int_0^{\infty} \int_0^u\ f(v)g(u-v)\chi_y(u)\ dvdu \\
&= \int_0^{\infty} \int_v^{\infty}\ f(v)g(u-v)\chi_y(u)\ dudv \stackrel{u \mapsto u + v}{=} \int_0^{\infty}\int_0^{\infty}\ f(v)g(u)\chi_y(u+v)\ dudv.
\end{split}
\end{equation*}

\noindent
Since $T_y$ is nonzero we pick $g_*\in C_c(\reals_+)$ such that $T_y(g_*) = 1.$ By the Laplace convolution property \eqref{Laplaceconvolutionproperty} we have 
$$\forall_{f\in L^1(\reals_+)}\ T_y(f\star_{\Laplace}g_*) = T_y(f)T_y(g_*) = T_y(f),$$

\noindent
so
$$\forall_{f\in L^1(\reals_+)}\ \int_0^{\infty}\int_0^{\infty}\ f(v)g_*(u)\chi_y(u+v)\ dudv = \int_0^{\infty}\ f(v)\chi_y(v)\ dv,$$

\noindent
which we rewrite as
$$\forall_{f\in L^1(\reals_+)}\ \int_0^{\infty} f(v) \left(\int_0^{\infty}\ g_*(u)\chi_y(u+v)\ du - \chi_y(v)\right) dv = 0.$$

\noindent
Since $v\mapsto \int_0^{\infty}\ g_*(u)\chi_y(u+v)\ du - \chi_y(v)$ is a $L^{\infty}-$function then by Lemma \ref{duBoisReymondlemma} we conclude that 
$$\forall_{v\in\reals_+}\ \chi_y(v) = \int_0^{\infty}\ g_*(u)\chi_y(u+v)\ du.$$

\noindent
We use this integral functional equation to establish continuity of $\chi_y.$ For a fixed $v_*\in\reals_+$ and $h\in (-v_*,v_*)$ we have
\begin{equation*}
\begin{split}
|\chi_y(v_* + h) - \chi_y(v_*)| &= \left| \int_0^{\infty}\ g_*(u)\chi_y(u+v_*+h)\ du - \int_0^{\infty}\ g_*(u)\chi_y(u+v_*)\ du\right| \\
&=  \left| \int_{v_*+h}^{\infty}\ g_*(u-v_*-h)\chi_y(u)\ du - \int_{v_*}^{\infty}\ g_*(u-v_*)\chi_y(u)\ du\right| \\
&\leqslant \int_{v_*+h}^{\infty}\ |g_*(u-v_*-h) - g_*(u-v_*)| |\chi_y(u)|\ du + \int_{v_*}^{v_*+h}\ |g_*(u-v_*)| |\chi_y(u)|\ du \\
&\leqslant \|\chi_y\|_{\infty} \left(\int_{v_*+h}^{\infty}\ |g_*(u-v_*-h) - g_*(u-v_*)|\ du + \|g_*\|_{\infty} h\right).
\end{split}
\end{equation*}

\noindent
Since 
$$\lim_{h\rightarrow 0}\ \int_{v_*+h}^{\infty}\ |g_*(u-v_*-h) - g_*(u-v_*)|\ du = 0$$

\noindent
due to the dominated convergence theorem, then 
$$\lim_{h\rightarrow 0} |\chi_y(v_* + h) - \chi_y(v_*)| = 0,$$

\noindent
establishing the continuity of $\chi_y$.

For the second time we use the Laplace convolution property \eqref{Laplaceconvolutionproperty} to obtain 
$$\forall_{f,g\in L^1(\reals_+)}\ \int_0^{\infty}\int_0^{\infty}\ f(v)g(u)\chi_y(u+v)\ dudv = \int_0^{\infty}\ \int_0^{\infty}\ f(v)g(u)\chi_y(v)\chi_y(u)\ dudv,$$

\noindent
which we rewrite as 
$$\forall_{f,g\in L^1(\reals_+)}\ \int_0^{\infty}\int_0^{\infty}\ f(v)g(u)\bigg(\chi_y(u+v) - \chi_y(v)\chi_y(u)\bigg)\ dudv = 0.$$

\noindent
Since $(u,v)\mapsto \chi_y(u+v) - \chi_y(v)\chi_y(u)$ is a $L^{\infty}-$function, then by Lemma \ref{duBoisReymondlemma} we have
$$\forall_{u,v\in\reals_+}\ \chi_y(u+v) = \chi_y(v)\chi_y(u).$$

\noindent
This functional equation proves that if $\chi_y(\bar{u}) = 0$ for some $\bar{u}\in\reals_+$ then $\chi_y = 0$ and consequently, $T_y = 0.$ We have already dealt with this case at the beginning of the proof, so we assume that $\chi_y(u) \neq 0$ for every $u\in\reals_+.$ By Theorem 1.6.11 in \cite{Bobrowski}, p. 36 (and boundedness of $\chi_y$) we obtain $\chi_y(u) = e^{-zu}$ for some $z\in\reals_+$ (which is dependent on $y$). This concludes the proof if we put $\theta_{\Laplace}(y) := z.$
\end{proof}

\end{document}